\newtheorem{thm}{Theorem}
\newtheorem{lem}[thm]{Lemma}
\newtheorem{claim}[thm]{Claim}
\newtheorem{conj}[thm]{Conjecture}
\newtheorem{obs}[thm]{Observation}
\theoremstyle{definition}
\newtheorem{remark}[thm]{Remark}
\def\Z{\mbox{\ensuremath{\mathbb Z}}\xspace}
\DeclareMathSymbol{\lsb@l}{\mathalpha}{letters}{`l}
\def\agame{Alpern's Caching Game\xspace}
\def\sgame{Single Caching Game\xspace}
\def\mgame{Multiple Caching Game\xspace}
\def\games{All or Nothing Caching Games with Bounded Queries\xspace}
\begin{document}

\title{\games}
\author{D\"om\"ot\"or P\'alv\"olgyi\thanks{Department of Pure Mathematics and Mathematical Statistics, University of Cambridge.
Research supported by the Marie Sk\l odowska-Curie action of the EU, under grant IF 660400.}
}
\maketitle
\stepcounter{footnote}

\begin{abstract}
We determine the value of some search games where our goal is to find all of some hidden treasures using queries of bounded size.
The answer to a query is either empty, in which case we lose, or a location, which contains a treasure.
We prove that if we need to find $d$ treasures at $n$ possible locations with queries of size at most $k$, then our chance of winning is $\frac{k^d}{\binom nd}$ if each treasure is at a different location and $\frac{k^d}{\binom{n+d-1}d}$ if each location might hide several treasures for large enough $n$.
Our work builds on some results by Cs\'oka who has studied a continuous version of this problem, known as \agame;
we also prove that the value of \agame is $\frac{k^d}{\binom{n+d-1}d}$ for integer $k$ and large enough $n$.
\end{abstract}

\section{Introduction}\label{sec:intro}

We consider the following game theoretic search problem.
Initially a player, called {\em hider}, hides $d$ treasures behind $n$ doors and after the hiding is over, another player, {\em searcher}, needs to find them all.
In each round searcher selects (guesses) at most $k$ doors.
If none of them has a treasure, she\footnote{For simplicity, we use {\em she} for searcher and {\em he} for hider.} loses.
Otherwise, hider reveals one of the treasures behind one of the selected doors.
If she finds all $d$ treasures with a total of $d$ guesses, she wins.

The value of the game is the chance of searcher winning if both players play optimally.
If hider is allowed to hide more treasures behind the same door (\mgame), we denote this value by $v_M(n,d,k)$, and if hider can hide at most one treasure behind each door (\sgame), we denote this value by $v_1(n,d,k)$.
Note that $v_M(n,d,k)$ decreases in $n$ and in $d$ and increases in $k$, while $v_1(n,d,k)$ is not monotone in $d$.
Our main result is to determine both values if $n$ is large enough.

\begin{thm}\label{thm:s} The value of the \sgame is $v_1(n,d,k)=\frac{k^d}{\binom nd}$ if $n\ge dk$.
\end{thm}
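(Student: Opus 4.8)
The plan is to prove the two matching bounds $v_1(n,d,k)\ge\frac{k^d}{\binom nd}$ (using $n\ge dk$) and $v_1(n,d,k)\le\frac{k^d}{\binom nd}$ (valid for all $n\ge d$) separately.

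For the lower bound I would let searcher play a purely non-adaptive strategy: draw a uniformly random ordering of the $n$ doors, cut its first $dk$ entries into $d$ consecutive blocks $S_1,\dots,S_d$ of size $k$ (this is where $n\ge dk$ is used), and query $S_1,\dots,S_d$ in this order no matter what the hider reveals. Since the $S_i$ are pairwise disjoint and any treasure revealed in an earlier round sits in an earlier block, the round-$i$ query hits a still-unrevealed treasure precisely when $S_i$ meets the treasure set $T$; hence searcher wins iff every $S_i$ meets $T$, and because $|T|=d$ this forces $|S_i\cap T|=1$ for all $i$. The hider's revealing choices play no role, so for \emph{every} hider the winning probability equals the probability that a uniformly random $d$-subset of $[n]$ has exactly one element in each of $d$ fixed disjoint $k$-blocks, namely $k^d/\binom nd$.

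For the upper bound I would have the hider choose $T$ uniformly among the $d$-subsets of $[n]$ and, whenever a queried set contains an unrevealed treasure, reveal one such treasure uniformly at random. Then for any searcher strategy $\sigma$ the winning probability is $\binom nd^{-1}\sum_T P_\sigma(\text{win}\mid T)$, so it is enough to establish the clean inequality
\[
\sum_{T\in\binom{V}{d}} P_\sigma(\sigma\text{ wins}\mid\text{treasures at }T)\ \le\ k^d
\]
for every finite ground set $V$, every $d$, and every searcher strategy $\sigma$ against the uniform-revealing hider. I would prove this by induction on $d$: for $d=1$ the left side is $|S_1|\le k$; for the step, fix searcher's first query $S_1$, condition on the revealed treasure $x_1\in S_1\cap T$, write $T=T'\sqcup\{x_1\}$, and interchange the order of summation to obtain $\sum_{x_1\in S_1}\sum_{T'\in\binom{V\setminus\{x_1\}}{d-1}}\frac{1}{|T'\cap S_1|+1}\,P_{\sigma_{x_1}}(\text{win}\mid T')$, where $\sigma_{x_1}$ is searcher's residual $(d-1)$-round strategy. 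Bounding the factor $\frac{1}{|T'\cap S_1|+1}$ by $1$, enlarging the index set from $\binom{V\setminus\{x_1\}}{d-1}$ to $\binom{V}{d-1}$, and applying the induction hypothesis to each $\sigma_{x_1}$ gives $\le|S_1|\,k^{d-1}\le k^d$.

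The one delicate point — and the thing I expect to be the genuine obstacle — is resisting the temptation to track the hider's \emph{posterior} on the remaining treasures through the induction: after one round that posterior is no longer uniform but reweighted by $1/(|T'\cap S_1|+1)$, and a value-based induction would have to absorb an ever-growing family of such weightings. The point of the argument above is to prove instead the pointwise-in-$T$ statement, in which every $T$ carries unit weight; the posterior reweighting then surfaces only as the harmless factor $\le 1$ that one simply drops. Finally, combining the two bounds and noting $k^d\le(n/d)^d\le\binom nd$ for $n\ge dk$ (so the common value is a legitimate probability) yields $v_1(n,d,k)=k^d/\binom nd$ for $n\ge dk$.
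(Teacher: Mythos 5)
Your proposal is correct, and the lower bound is essentially the paper's: picking a uniformly random ordering and querying $d$ disjoint random $k$-blocks is the same strategy as the paper's ``always pick $k$ new doors at random,'' with the same win-iff-every-block-meets-$T$ analysis giving $k^d/\binom{n}{d}$. Where you genuinely diverge is the upper bound. The paper's argument is a direct counting/encoding one (its Lemma \ref{lem:upper}): fix a deterministic searcher strategy, view it as a depth-$d$ tree whose branching at each level is the identity of the revealed door, hence at most $k$ per level; every winning play determines $T$ as the set of the $d$ revealed doors, so at most $\prod_i |X_i|\le k^d$ of the $\binom{n}{d}$ equally likely hiding configurations are winnable at all, and the bound follows in one line, with no assumption on how the hider chooses which treasure to reveal. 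You instead fix the uniform-revealing hider and prove the pointwise statement $\sum_T P_\sigma(\mathrm{win}\mid T)\le k^d$ by induction on $d$, interchanging summation and discarding the posterior factor $\frac{1}{|T'\cap S_1|+1}\le 1$. Your induction is valid (the residual strategy $\sigma_{x_1}$ is a legitimate $(d-1)$-treasure strategy on the same ground set, and enlarging the index set only adds nonnegative terms), and your observation that one must prove the unweighted-in-$T$ statement rather than track the evolving posterior is exactly the right way to make the induction close. What the paper's counting argument buys is brevity and robustness (it works verbatim for any revelation rule and generalizes to the meta-lemma used for the \mgame); what your induction buys is that it makes explicit where the slack is (the dropped factor is $1$ precisely when no query ever contains two treasures, which is exactly the regime of the optimal strategy), at the cost of being considerably longer.
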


\begin{thm}\label{thm:m} The value of the \mgame is $v_M(n,d,k)=\frac{k^d}{\binom{n+d-1}d}$ if $n$ is large enough (compared to $k$ and $d$).
\end{thm}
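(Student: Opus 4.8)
The plan is to prove matching bounds. For the upper bound $v_M(n,d,k)\le\frac{k^d}{\binom{n+d-1}d}$, I would use a single hider strategy together with a counting lemma. Let the hider pick a uniformly random size-$d$ multiset of the $n$ doors and, whenever the queried set meets the support of this multiset, reveal a treasure at the occupied queried door of smallest index. Then I claim that any fixed deterministic searcher wins against at most $k^d$ of the $\binom{n+d-1}d$ multisets, which after averaging over the searcher's randomness gives the bound. The claim is proved by induction on $d$, the number of treasures still to find: if $Q$ is the searcher's first query, every beaten multiset $M$ meets $Q$ and is recovered uniquely from the pair $(x,M')$, where $x$ is the smallest occupied door of $Q$ and $M'$ is $M$ with one copy of $x$ deleted; but $M'$ is a size-$(d-1)$ multiset beaten by the searcher's continuation after $x$ has been revealed, of which there are at most $k^{d-1}$ by the inductive hypothesis, so summing over the at most $k$ choices of $x$ yields $k^d$. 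This part needs no hypothesis on $n$.

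The lower bound $v_M(n,d,k)\ge\frac{k^d}{\binom{n+d-1}d}$ is where the real work is, and where largeness of $n$ enters. I would build an explicit randomized searcher strategy and show that for every multiset $M$ it finds all of $M$ with probability at least $\frac{k^d}{\binom{n+d-1}d}$, no matter which treasure the hider chooses to reveal at each step. The model is the optimal searcher for the \sgame: fix a uniformly random order of the doors and, in round $i$, query the $i$-th consecutive block of $k$ doors; against any set of $d$ distinct doors this wins exactly when each block receives exactly one door, with probability $\frac{k^d}{\binom nd}$. In the multiple game this must be corrected, since a door may hide several treasures and the searcher cannot see how many: she has to keep an already-successful door in her query for a while in case more treasures sit behind it, while not wasting too many of her $k$ slots on it in the (typical, for large $n$) event that it is now empty. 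The right bookkeeping should make her imitate the block strategy on a ground set of size $n+d-1$, each successful probe effectively using up one of the $d-1$ extra slots, so that $\binom{n+d-1}d$, the number of configurations of this enlarged ground set, appears in the denominator. The main obstacle is to choose the re-query rule so precisely that the win probability is uniformly at least $\frac{k^d}{\binom{n+d-1}d}$ over \emph{all} configurations (with equality against the uniform hider), and to verify that for $n$ large the enlarged ground set never overflows the $n$ real doors. An alternative, closer to the paper's debt to Cs\'oka, is to route the lower bound through \agame, bounding $v_M(n,d,k)$ below by the value of the continuous game and appealing to (a sharpening of) Cs\'oka's analysis, which gives $\frac{k^d}{\binom{n+d-1}d}$ for integer $k$ and large $n$; the cost is controlling the rounding of the optimal continuous searcher, which is again where largeness of $n$ is spent.

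Together the two bounds give Theorem~\ref{thm:m}, and the same hider strategy and searcher strategy, suitably interpreted, should also determine the value of \agame for integer $k$ and large $n$.
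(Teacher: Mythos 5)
Your upper bound is fine and is essentially the paper's Lemma \ref{lem:upper} written out in detail: a deterministic searcher can distinguish at most $k^d$ of the $\binom{n+d-1}{d}$ multisets, so the uniform hider caps her at $\frac{k^d}{\binom{n+d-1}{d}}$. (Your induction on $d$ with the ``smallest occupied queried door'' tie-breaking is a clean way to make the count rigorous.)

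The lower bound, however, has a genuine gap: you correctly identify that the whole difficulty is designing the re-query rule --- how long to keep a door that has just yielded a treasure inside the query --- so that the win probability is at least $\frac{k^d}{\binom{n+d-1}{d}}$ against \emph{every} multiset, but you then write that ``the right bookkeeping should make her imitate the block strategy on a ground set of size $n+d-1$'' without exhibiting that bookkeeping. That is precisely the content of the theorem. The paper's construction is: first solve $k=1$ by having the searcher commit to a uniformly random allocation $\mu$ and hunt for it greedily (largest multiplicities first, staying on a door until $\mu$ says it is exhausted); this induces, for each Young diagram $\lambda$ of treasures found so far, a probability $p_\lambda(n,d,1)$ of re-querying the current door. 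For $k>1$ she re-queries with probability $p_\lambda(n,d,k)=k\cdot p_\lambda(n,d,1)$ (adding $k-1$ fresh doors) and otherwise takes $k$ fresh doors; one then checks that for every $\lambda$ and every door $i$ her chance of querying $i$ next is exactly $k$ times the $k=1$ chance, so each allocation is found with probability $k^d/\binom{n+d-1}{d}$. Largeness of $n$ is spent exactly on making $k\cdot p_\lambda(n,d,1)\le 1$. Your fallback route through \agame does not work either: the paper's Observation \ref{obs:link} gives $v_A\ge v_M$, not $v_M\ge v_A$, and Cs\'oka's value $\frac{k^d}{\binom{n+d-1}{d}}$ for general $d$ was only a conjecture --- the paper proves it (Theorem \ref{thm:a}) \emph{as a corollary} of the discrete lower bound, so appealing to it here would be circular.
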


The upper bounds follow from a simple counting argument and can be formulated in the following meta-lemma, whose continuous equivalent for some games was stated by Cs\'oka \cite{Csoka}.

\begin{lem}\label{lem:upper} The value of any finite search game where searcher needs to find all hidden treasures is bounded from above by the maximal number of hiding possibilities a deterministic strategy of the searcher can reveal divided by the total number of hiding possibilities.
\end{lem}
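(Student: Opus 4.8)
The plan is the standard minimax trick: produce one explicit randomized strategy for the hider and show that against it no searcher, however clever, can win with probability larger than the claimed ratio. First I would fix the bookkeeping. As the game is finite, the hider has a finite set $\mathcal H$ of pure strategies; these are the ``hiding possibilities'' in the statement. It is worth stressing that a pure hider strategy records not only the locations of the treasures but also a complete contingency plan saying which treasure to uncover after each possible history of searcher queries, so that once a pure hider strategy $h\in\mathcal H$ and a deterministic searcher strategy $\sigma$ are both fixed, the play of the game --- and hence whether the searcher finds all treasures --- is completely determined. Write $\sigma\vdash h$ when it does, let $r(\sigma)=|\{h\in\mathcal H:\sigma\vdash h\}|$ be the number of hiding possibilities that $\sigma$ reveals, and set $N=|\mathcal H|$ and $R=\max_{\sigma}r(\sigma)$, the maximum appearing in the lemma.

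Next I would let the hider play the uniform distribution $\tau^*$ on $\mathcal H$. Against a deterministic searcher strategy $\sigma$ the searcher wins exactly when the sampled hiding possibility falls in $\{h:\sigma\vdash h\}$, so her winning probability is $r(\sigma)/N\le R/N$. A general (randomized) searcher strategy $\mu$ amounts to drawing a deterministic $\sigma$ according to $\mu$ and playing it, so against $\tau^*$ her winning probability is $\mathbb{E}_{\sigma\sim\mu}\bigl[r(\sigma)/N\bigr]\le R/N$ as well.

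Finally I would appeal only to the definition of the value as what the searcher can guarantee: it equals the supremum over searcher strategies of the infimum over hider strategies of the winning probability. Restricting the inner infimum to the single strategy $\tau^*$ shows that this supremum is at most $R/N$, which is the assertion. (In particular, for this inequality the minimax theorem is not needed --- only that $\tau^*$ is an admissible hider strategy.)

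I do not anticipate a genuine obstacle; the argument is essentially one line once the framework is in place. The two points deserving a little care are that the game's finiteness is what lets a randomized searcher strategy be analysed as a mixture of the deterministic ones, so that the per-strategy bound $r(\sigma)/N$ carries over, and that a ``hiding possibility'' must be understood as a full pure strategy of the hider, reveal decisions included --- otherwise the uniform hider $\tau^*$ would not be well defined and the count $r(\sigma)$ would not match the one in the statement. Pinning down $\mathcal H$ concretely and estimating $R$ in the games of Theorems~\ref{thm:s} and~\ref{thm:m} is then left to the proofs of those theorems.
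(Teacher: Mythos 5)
Your argument is the paper's own: the hider hides uniformly at random, a deterministic searcher then wins with probability at most $R/N$ by a counting argument, and a randomized searcher is a mixture of deterministic ones, so the value (a sup--inf) is at most $R/N$ with no minimax theorem needed. The one place you diverge is in insisting that a ``hiding possibility'' must be a full pure hider strategy, reveal decisions included. That is not how the lemma is applied: in Theorems~\ref{thm:s} and~\ref{thm:m} the denominator is $\binom nd$ resp.\ $\binom{n+d-1}d$, counting treasure allocations only, and in the \mgame the number of reveal contingency plans varies with the allocation (all $d$ treasures behind one door leaves the hider no choices at all, while a spread-out allocation gives him genuine choices), so the uniform distribution on pure strategies does not project to the uniform distribution on allocations, and your ratio is then not the quantity the theorems need. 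Your stated reason for the reinterpretation --- that otherwise the uniform hider would be ill-defined --- does not hold up: let the hider draw the allocation uniformly and reveal according to any fixed rule whatsoever; for a deterministic searcher the set of allocations on which she wins is contained in the at most $R$ allocations her strategy can reveal (a win pins down the entire allocation), and this bound is independent of the reveal rule. This is exactly the point of the paper's footnote that the bound survives outcomes depending on auxiliary parameters. With ``hiding possibility'' read as ``allocation'', the rest of your proof goes through verbatim.
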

\begin{proof} If the treasures are hidden according to a fixed distribution, then it is enough to consider deterministic strategies of the searcher, as finite search games have a value.\footnote{This holds even if the outcomes of the queries depend on any, possibly random parameter that is independent of the searcher's strategy.} 
The performance of these is bounded from above with the claimed quantity against uniformly choosing from the available hiding possibilities.
\end{proof}

The proof of our lower bounds also builds on results by Cs\'oka \cite{Csoka}, who has studied a continuous version of the \mgame, known as \agame \cite{Alpernpaper}; see also \cite{Alpernbook}. 
In this game hider is allowed to dig holes at $n$ possible places in a total of $1$ unit depth, and then hide the $d$ treasures while burying the holes back at any depth.
For example, he can dig one hole $0.8$ deep and another hole $0.2$ deep, then hide a treasure in the first hole at depths $0.7$ and another treasure in the same hole at depth $0.5$, then a third treasure in the second hole at depth $0.2$.
After the hiding of the treasures is over, searcher can dig out a total of $k$ depth and needs to find all $d$ treasures.
(She notices when she finds a treasure.)

Our motivation to introduce the \mgame, which was to some extent already done implicitly in \cite{Csoka}, was the following connection to \agame.

\begin{obs}\label{obs:link} The value of \agame is at least the value of the \mgame if $k$ is an integer, i.e., $v_A(n,d,k)\ge v_M(n,d,k)$.
\end{obs}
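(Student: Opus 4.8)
The plan is to take the searcher's optimal strategy in the \mgame and have her run it inside \agame, translating each round's guess of at most $k$ doors into a bout of digging. Two features make this work: $k$ is an \emph{integer}, so ``a set of at most $k$ doors'' becomes ``at most $k$ places to dig'', and the hider's holes satisfy $\sum_i h_i = 1$, which -- since a treasure is never deeper than its own hole -- keeps the total digging bounded.

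In detail: since finite search games have a value, fix an optimal (in general mixed) strategy $\sigma$ for the searcher in the \mgame with parameters $n,d,k$; it wins with probability at least $v_M(n,d,k)$ against every hider, and a hider configuration there is just a size-$d$ multiset of the $n$ doors. Given an arbitrary hider configuration of \agame -- hole depths $h_1,\dots,h_n\ge 0$ with $\sum_i h_i=1$ and some placement of the $d$ treasures -- let $m_i$ be the number of treasures at location $i$ and let $M$ be the multiset in which $i$ has multiplicity $m_i$. The searcher runs $\sigma$ against $M$: whenever $\sigma$ asks to guess a set $S$ of at most $k$ doors, she digs at the locations of $S$ -- raising a common frontier, resuming from the depths already reached there in earlier rounds -- until a previously unseen treasure appears at some $a\in S$; she then reports ``guessed $S$, revealed $a$'' to $\sigma$ and continues, keeping any further treasures uncovered at that moment in reserve to serve as later reveals. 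If the frontier for $S$ reaches depth $1$ without a new treasure -- which clears $S$ completely -- the simulated run of $\sigma$ is already a loss and she concedes. Because $\sigma$ wins with probability at least $v_M(n,d,k)$ against \emph{every} hider of the \mgame, it does so in particular against the one given by $M$ together with whatever consistent reveals the searcher feeds it; and on any simulated run that $\sigma$ wins, every reported treasure is one the searcher has physically dug past, so she has found all $d$ treasures. Hence she wins \agame with probability at least $v_M(n,d,k)$ against the given configuration, and since the configuration was arbitrary, $v_A(n,d,k)\ge v_M(n,d,k)$.

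The step that calls for real care -- and the one I expect to be the main obstacle -- is checking that the searcher never digs more than a total of $k$. On a run that $\sigma$ wins, every guessed set does turn up a fresh treasure, so the searcher only pushes her frontier at a location down to the depth at which that round's treasure surfaces; accounting for the at most $k$ locations dug in each of the $d$ rounds, and using that the revealed treasures are never deeper than their holes and that $\sum_i h_i = 1$, a short computation bounds the total amount dug by $k$ -- this is exactly where the integer value of $k$, i.e. the ``at most $k$ doors per round'' bound, enters. Pinning down the frontier mechanics, the carry-over of digging between rounds, the handling of the reserved treasures, and the (harmless) behaviour on losing runs is routine but fiddly; everything else is just unwinding the definitions.
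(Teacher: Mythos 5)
Your proposal is correct and follows essentially the same route as the paper's own proof: simulate an optimal \mgame strategy inside \agame by digging the guessed holes simultaneously, observe that the depth-determined order of discovery is just one admissible reveal behaviour of the \mgame hider (so the strategy's guarantee still applies), and bound the total excavation by $k$ using $\sum_i h_i=1$ together with the at most $k$ holes dug per round. One caution on the ``frontier mechanics'' you defer: the locations of $S$ must advance by a \emph{common increment} from their individual prior depths (the paper's ``same speed''), not to a common absolute depth --- under the latter reading, revisiting a deep hole alongside fresh shallow ones can push the total amount dug above $k$.
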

\begin{proof} If $k$ is an integer, then searcher can pretend that she is playing the \mgame instead of \agame by always digging simultaneously in $k$ holes with the same speed.
In case there are treasures in more than one hole, in \agame the depth determines which she finds first; this is better for her than the rule of the \mgame, according to which hider decides which treasure she finds.
If she manages to find all $k$ treasures, then she has to dig for at most $k$ depth in total.
Therefore if she wins the \mgame, she also wins \agame with this strategy.
\end{proof}

From Observation \ref{obs:link} and the lower bound of Theorem \ref{thm:m}, combined with the upper bound given by (Cs\'oka's continuous version of) Lemma \ref{lem:upper}, we get the following.

\begin{thm}\label{thm:a} The value of \agame is $v_A(n,d,k)= \frac{k^d}{\binom{n+d-1}d}$ if $k$ is an integer and $n$ is large enough (compared to $k$ and $d$).
\end{thm}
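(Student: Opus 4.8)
The plan is to prove Theorem~\ref{thm:a} by squeezing $v_A(n,d,k)$ between matching bounds, almost all of whose ingredients are already in place, so the proof is essentially bookkeeping plus two short volume computations and I do not expect a genuine obstacle. \emph{Lower bound.} Because $k$ is an integer, Observation~\ref{obs:link} gives $v_A(n,d,k)\ge v_M(n,d,k)$, and for $n$ large enough Theorem~\ref{thm:m} gives $v_M(n,d,k)=\frac{k^d}{\binom{n+d-1}{d}}$; concretely one takes the optimal searcher strategy witnessing Theorem~\ref{thm:m} and runs it through the simulation in the proof of Observation~\ref{obs:link} (digging in $k$ holes at equal speed), so nothing new is required here.

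\emph{Upper bound.} I would apply the continuous analogue of Lemma~\ref{lem:upper} (due to Cs\'oka) to \agame, which reduces everything to (i) the total ``volume'' of hiding configurations and (ii) the maximal volume a deterministic searcher with digging budget $k$ can reveal. For (i): a hiding is given by treasure locations $\ell\in[n]^d$ and depths $t\in[0,\infty)^d$ with $\sum_{i=1}^n \max_{j:\,\ell_j=i}t_j\le 1$; integrating out $t$ hole by hole is a Dirichlet integral and contributes $\frac{1}{d!}\prod_{i=1}^n |\ell^{-1}(i)|!$ for each fixed $\ell$, and $\sum_{\ell\in[n]^d}\prod_i |\ell^{-1}(i)|! = n(n+1)\cdots(n+d-1)$ (this counts placements of $d$ labelled balls into $n$ ordered lists), so the total volume is exactly $\binom{n+d-1}{d}$. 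For (ii): a non-adaptive searcher who digs hole $i$ down to depth $x_i$ with $\sum_i x_i=k$ wins exactly on the hidings with $t_j\le x_{\ell_j}$ for all $j$, of volume at most $\sum_{\ell}\prod_j x_{\ell_j}=(x_1+\dots+x_n)^d=k^d$ by the multinomial theorem. To cover adaptive searchers as well I would induct on $d$ by peeling off the first treasure found: if it is discovered at depth $t_1$ in some hole after total digging time $\tau_1$, then, after subtracting from each hole's depth coordinate the amount already dug in that hole, the remaining task is a fresh instance with $d-1$ treasures and budget $k-\tau_1$, contributing volume at most $(k-\tau_1)^{d-1}$; summing over which of the $d$ treasures comes first and integrating $t_1$ against the fixed initial digging schedule (the searcher has nothing to react to before her first find), whose rates across the holes always sum to $1$, collapses the bound back to $k^d$. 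Dividing the two quantities gives $v_A(n,d,k)\le\frac{k^d}{\binom{n+d-1}{d}}$, valid for all $n$ and all real $k$.

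The only point that calls for any care is the adaptive case of (ii): the non-adaptive count lands on $k^d$ on the nose, so one must check adaptivity cannot do better, which is exactly what the peeling induction settles and which is in any case implicit in Cs\'oka's treatment of \agame. It also matters that the searcher observes only the outcomes of her own digs and not where the hider dug, so that Lemma~\ref{lem:upper} (and its continuous form) genuinely bounds the winning probability. Finally, the hypotheses enter asymmetrically: largeness of $n$ and integrality of $k$ are used only through Theorem~\ref{thm:m} and Observation~\ref{obs:link} in the lower bound, whereas the upper bound is unconditional --- as it must be, since $\frac{k^d}{\binom{n+d-1}{d}}$ exceeds $1$ for small $n$ and hence cannot be the value there.
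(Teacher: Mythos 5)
Your proof is correct and follows essentially the same route as the paper: the lower bound comes from Observation~\ref{obs:link} combined with the searcher strategy of Theorem~\ref{thm:m}, and the upper bound from Cs\'oka's continuous version of Lemma~\ref{lem:upper} against the uniform hiding strategy. The only difference is that you spell out the volume computations behind Cs\'oka's upper bound (which the paper simply cites), and your Dirichlet-integral count of $\binom{n+d-1}{d}$ and the $k^d$ bound for the revealed volume are both correct.
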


\begin{remark} Observation \ref{obs:link} would also hold for the following variant of the \mgame.
At the beginning we start with $P=k$ {\em power} and in each turn we can guess at most $P$ doors.
If we guess $l<P$ doors and none of them contains a treasure, then instead of losing immediately, we can continue the game with $P:=P-l$ power from now on.
If we denote the value of this new variant by $v_P$, then from the proof of Observation \ref{obs:link} we get $v_A(n,d,k)\ge v_P(n,d,k)\ge v_M(n,d,k)$ and from Theorem \ref{thm:a} that all values are equal for large $n$'s.
\end{remark}

\begin{remark} We have introduced the rule that if there are treasures behind more than one guessed door, it is hider who reveals one of them, so that the argument used in the proof of Observation \ref{obs:link} works.
The game where we use instead the rule that a treasure is revealed from such doors randomly, chosen uniformly either according to the number of doors or to the number of treasures they hide, we get other natural variants of the problem.
The values of these games are bounded from below by the value of the \mgame, $v_M$, and the upper bounds given by Lemma \ref{lem:upper} are also identical, thus the values of these random variants can only differ from $v_M$ for small $n$'s.
\end{remark}

The organization of the rest of the paper is as follows.
In Section \ref{sec:s} we give some simple examples and give the (quite elementary) proof of Theorem \ref{thm:s}.
In Section \ref{sec:m} we prove our main result, Theorem \ref{thm:m}.
Section \ref{sec:small} discusses what happens for small values of $n$ and Section \ref{sec:remarks} contains some further remarks and open problems.
We end the Introduction with a very brief summary of previous results.

\subsection{History of \agame}\label{sec:a}

The value of \agame, $v_A(n,d,k)$, was determined for some small values in \cite{Alpernpaper,Csoka,CsL}.
For example, it is easy to see that $v_A(n,1,k)=\frac{\lfloor k\rfloor}n$, but the problem is open in general already for $d=2$.
Cs\'oka \cite{Csoka} has shown that the uniform hiding strategy guarantees $v_A(n,d,k)\le \frac{k^d}{\binom{n+d-1}d}$ using a  continuous version of Lemma \ref{lem:upper}.
He has also proved that $v_A(n,2,k)= \frac{k^2}{\binom{n+1}2}$ whenever $k$ is an integer and made the following two conjectures.

\begin{conj}[Cs\'oka \cite{Csoka}]\label{conj:weak} $v_A(n,d,k)= \frac{k^d}{\binom{n+d-1}d}$ if $k$ is an integer and $n\ge dk$.
\end{conj}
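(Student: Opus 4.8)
The upper bound $v_A(n,d,k)\le \frac{k^d}{\binom{n+d-1}d}$ is already available for every $n$: it is Cs\'oka's continuous version of Lemma~\ref{lem:upper}. Hence the whole content of the conjecture is the matching lower bound --- a searcher strategy in \agame achieving this probability. Since $k$ is an integer, Observation~\ref{obs:link} reduces this to the \mgame: it would suffice to prove $v_M(n,d,k)\ge \frac{k^d}{\binom{n+d-1}d}$ for all $n\ge dk$, that is, to show that the lower-bound half of Theorem~\ref{thm:m} already holds on this larger range. (Should $v_M$ turn out to be strictly smaller than $\frac{k^d}{\binom{n+d-1}d}$ for some $n$ that is at least $dk$ but below the threshold used in Section~\ref{sec:m}, one would instead have to exploit the extra freedom of the continuous game and analyse \agame directly; but I would attack the \mgame statement first, as it is cleanly combinatorial.)

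The next step is to recast this lower bound as a balanced covering problem via the minimax theorem. A deterministic searcher strategy in the \mgame is a rooted tree of depth $d$: a first query of at most $k$ doors, a continuation for each possible answer, and so on, branching at most $k$ times at each of its $d$ levels. The size-$d$ multisets of doors it wins against are exactly the ones whose treasures appear, in some order, along one of its root-to-leaf branches, so every deterministic strategy wins against at most $k^d$ of the $\binom{n+d-1}d$ possible hidings. Consequently, a randomized strategy that beats every hiding with probability at least $\frac{k^d}{\binom{n+d-1}d}$ is the same as a distribution $\mu$ on deterministic strategies under which each multiset has coverage at least $\frac{k^d}{\binom{n+d-1}d}$; double counting then forces every coverage to equal $\frac{k^d}{\binom{n+d-1}d}$ exactly and forces $\mu$ to be supported on \emph{full} strategies, those winning against exactly $k^d$ distinct multisets. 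So what is wanted is a perfectly balanced family of full strategies, and the role of the hypothesis $n\ge dk$ should be that $dk$ is precisely the threshold above which such a family exists.

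For the construction I would try structured strategies: identify the doors with $\Z/n\Z$, let each query along a branch be a block of $k$ (cyclically) consecutive residues, and move the block according to the answer, so that the $k^d$ leaves of a single strategy pass, after symmetrization, to one ``box'' of multisets and the strategy is full. One then picks these blocks and their shifts from a distribution designed so that each of the $\binom{n+d-1}d$ multisets is covered the same number of times; the hypothesis $n\ge dk$ should enter exactly as the point at which $d$ blocks of size $k$ leave enough room inside $\Z/n\Z$ for the required configurations to exist. Equivalently this is a fractional matching between full strategies and multisets, whose existence for all $n\ge dk$ can be phrased as a Hall-type feasibility condition. A complementary route is induction on $d$ with Cs\'oka's $d=2$ case as the base: condition on the door at which the first treasure is revealed and try to reduce the situation to a $(d-1)$-treasure instance.

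The main obstacle is precisely the sharpening from ``$n$ large'' to the explicit threshold $n\ge dk$. The averaging that works for huge $n$ becomes an equality only when the covering is perfect, so the window just above $dk$ has to be handled by an honest construction rather than by absorbing error terms; in particular $k^d$ need not divide $\binom{n+d-1}d$, so the balanced family must be genuinely fractional, and the cyclic wrap-around corrections near $n=dk$ must cancel exactly. Keeping the passage from ordered tuples of revealed doors to their multisets from unbalancing the weights is the delicate bookkeeping here, and I expect this, together with the boundary regime, to be where the real difficulty lies --- with the settled $d=2$ case as the evidence that it can be overcome.
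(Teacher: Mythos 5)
The statement you were asked to prove is a \emph{conjecture}, and the paper does not prove it: Theorem \ref{thm:a} only establishes $v_A(n,d,k)=\frac{k^d}{\binom{n+d-1}d}$ for $n$ \emph{large enough} compared to $d$ and $k$, and the paper explicitly leaves the range between $dk$ and that unspecified threshold open (indeed it conjectures the bound should already hold at $n=dk-1$). Your framework is exactly the one the paper uses for the large-$n$ case: the upper bound is Cs\'oka's continuous version of Lemma \ref{lem:upper}, and Observation \ref{obs:link} reduces the lower bound to showing $v_M(n,d,k)\ge \frac{k^d}{\binom{n+d-1}d}$. Your double-counting observation that any optimal randomized strategy must be a perfectly balanced family of full strategies is correct and is implicit in the paper's discussion in Section \ref{sec:small}. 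So up to that point you are reproducing the paper's reduction, together with its caveat (which you note) that Observation \ref{obs:link} only gives $v_A\ge v_M$ and might in principle be lossy for small $n$.

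The gap is the construction itself, and it is a genuine one --- the same one that keeps the conjecture open. The paper's strategy for $v_M$ scales the $k=1$ strategy by setting $p_\lambda(n,d,k)=k\cdot p_\lambda(n,d,1)$, and this quantity can exceed $1$ when $n$ is not large: for example $p_.(6,3,2)=2\cdot\frac{36}{56}>1$ even though $n=dk$ there, so the scaled strategy is simply not a probability distribution in the critical range. The paper salvages the cases $d=3$, $k=2$, $n=5,6$ by ad hoc modified probabilities and then poses the general existence question (``Do some $p_\lambda(n,d,k)$ probabilities exist for every $n\ge dk$?'') as an open problem in Section \ref{sec:remarks}, suggesting a Baranyai-type argument. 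Your proposed cyclic-block construction and Hall-type feasibility condition are plausible directions but are not carried out, and nothing in your sketch engages with the concrete obstruction above; you yourself flag the window just above $dk$ as ``where the real difficulty lies.'' So the proposal, like the paper, stops short of the conjectured statement: what it actually supports is only the large-$n$ result of Theorem \ref{thm:a}.
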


\begin{conj}[Cs\'oka \cite{Csoka}]\label{conj:strong} $v_A(n,d,k)= \frac{k^d}{\binom{n+d-1}d}$ if $k\ge 1+\frac2{d-1}$ and $n\ge dk$.
\end{conj}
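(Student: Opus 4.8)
The upper bound $v_A(n,d,k)\le\frac{k^d}{\binom{n+d-1}d}$ holds unconditionally, by Cs\'oka's uniform hiding together with the continuous form of Lemma~\ref{lem:upper}, so the whole content of Conjecture~\ref{conj:strong} is the matching lower bound: for $d\ge 2$, $k\ge 1+\frac2{d-1}$ and $n\ge dk$ one must exhibit a single randomised digging plan for the searcher that wins with probability at least $\frac{k^d}{\binom{n+d-1}d}$ against every pure hiding strategy --- equivalently, against every choice of hole depths $h_1,\dots,h_n$ with $\sum h_i=1$ and of a hole and a depth for each of the $d$ treasures, which in \agame are the hider's only moves. The genuinely new difficulty relative to Theorem~\ref{thm:a} is that $k$ is not assumed to be an integer, so Observation~\ref{obs:link} no longer lets us simulate the \mgame, and the searcher must use a truly continuous schedule: digging several holes in parallel at variable speeds and reallocating her effort as treasures surface.

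The plan is to re-run the recursive peeling argument that proves Theorem~\ref{thm:m} --- in which the searcher, in the discrete \mgame, peels treasures one at a time off a cleverly randomised family of query sets --- inside \agame, reading ``query at most $k$ doors'' as ``dig a random set of holes at total speed one'', so that a fractional budget gets spent fractionally rather than rounded down. This is encouraged by the factorisation
\[
\frac{k^d}{\binom{n+d-1}d}=\frac{k}{n}\cdot\frac{2k}{n+1}\cdots\frac{dk}{n+d-1}=\prod_{j=0}^{d-1}\frac{(j+1)k}{n+j},
\]
which makes the target look like the success probability of a $d$-stage procedure whose $(j+1)$-st factor is the conditional chance of turning up one more treasure, after which the searcher faces an approximate copy of the $(d-j-1)$-treasure problem with a reduced budget. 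The substance of the proof would be making this recursion literally close.

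That is where I expect the real fight, and where the hypothesis $k\ge 1+\frac2{d-1}$ must earn its keep. A naive ``fully excavate one hole, then recurse'' scheme is hopeless: excavating a hole can cost depth as much as $1$, which forces the budget demanded at the next stage to exceed what $d-1$ treasures alone need and thus to overshoot the threshold. Hence the searcher must not commit to digging any hole to its bottom, but must spread depth adaptively, abandoning a hole once the marginal gain from digging there has dropped; the constant $\frac2{d-1}$ should be precisely the worst-case slack that makes the stage-by-stage accounting telescope. Useful consistency checks are Cs\'oka's exact evaluation of $v_A(n,2,k)$ --- the case $d=2$, threshold $k=3$ --- and the case $d=1$, where $v_A=\lfloor k\rfloor/n$ rather than $k/n$, which is exactly why the formula $1+\frac2{d-1}$ degenerates at $d=1$. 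Checking that each conditional success probability stays at least $\frac{(j+1)k}{n+j}$ throughout is the crux; I expect $n\ge dk$ to play only the mild role of keeping every such factor at most $1$ along the way.

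A natural intermediate goal --- arguably a prerequisite --- is the integer-budget Conjecture~\ref{conj:weak}, which asks only for sharpening the ``$n$ large enough'' of Theorem~\ref{thm:a} to the tight $n\ge dk$; settling it should also identify the correct $n$-threshold for the general case. I would run this alongside a direct attack on $d=2$ (so $k\ge3$) and $d=3$ (so $k\ge2$) for non-integral $k$, where the optimal digging plans ought to be explicit enough to analyse by hand, in the hope of distilling a uniform inductive template --- which would incidentally reveal whether $1+\frac2{d-1}$ is the true threshold or merely an artefact of the method.
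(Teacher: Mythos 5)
There is no proof in the paper to compare against: the statement you are attacking is Conjecture~\ref{conj:strong}, which the paper explicitly leaves open --- it states that for non-integer $k$ its results ``have no implication and thus Conjecture~\ref{conj:strong} remains wide open,'' and that the answer is known for no non-integer $k>3$ even when $d=2$. Your text is accordingly a research plan rather than a proof: you correctly isolate the known upper bound (Cs\'oka's uniform hiding plus the continuous Lemma~\ref{lem:upper}), correctly observe that Observation~\ref{obs:link} breaks for fractional $k$, and the factorisation $\frac{k^d}{\binom{n+d-1}d}=\prod_{j=0}^{d-1}\frac{(j+1)k}{n+j}$ is right, but you concede yourself that the crux --- constructing a continuous digging schedule whose conditional success probabilities telescope --- is not done. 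So there is a genuine gap: the entire lower-bound construction is missing, and nothing in the paper supplies it.

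Two more specific cautions. First, the paper's proof of Theorem~\ref{thm:m} is not quite the ``recursive peeling'' you describe: it couples a $k=1$ strategy (commit to a uniformly random allocation $\mu$ and chase it, always staying on a door that still owes treasures under $\mu$) with a $k>1$ strategy obtained by multiplying every ``stay on the current door'' probability $p_\lambda(n,d,1)$ by $k$; the reason it needs $n$ large is precisely to keep $k\cdot p_\lambda(n,d,1)\le 1$. Any adaptation to non-integer $k$ has to decide what ``guess $k-1$ new doors'' means, which is where the continuous structure of \agame must enter in an essential way. Second, your reading of where $k\ge 1+\frac2{d-1}$ ``earns its keep'' is likely backwards: per the paper, Cs\'oka's threshold comes from a \emph{hider} construction (concentrating treasures, in the spirit of Claim~\ref{claim:allinone}) that beats the uniform hiding strategy when $k<1+\frac2{d-1}$, so below the threshold the claimed value is simply false as an upper bound on what the searcher can achieve. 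The constant is therefore not ``slack in the searcher's stage-by-stage accounting'' but the point at which uniform hiding becomes optimal for the hider; a correct proof must show the searcher matches $\frac{k^d}{\binom{n+d-1}d}$ exactly down to that hider-determined threshold, and your proposed telescoping gives no mechanism that would see the constant $1+\frac2{d-1}$ at all.
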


Notice that Conjecture \ref{conj:strong} is stronger than Conjecture \ref{conj:weak} (discounting some small, known cases).
Cs\'oka gave a construction that shows why $k\ge 1+\frac2{d-1}$ is needed and suggested a searching strategy type that requires $n\ge dk$; our strategy for searcher also falls in this category.
This searching strategy type was inspired by another interesting variant of \agame, where instead of $n$ doors the hider can hide the treasures anywhere under a measurable interval of length $n$; this version was introduced and solved almost completely by Cs\'oka \cite{Csoka}, proving that its value equals $\frac{k^d}{\binom{n+d-1}d}$ if $n\ge dk$.

Theorem \ref{thm:a} solves Conjecture \ref{conj:weak} if $n$ is large enough.
We conjecture that in fact Conjecture \ref{conj:weak} and Theorem \ref{thm:m} already hold for $n=dk-1$.
For non-integer $k$ our results have no implication and thus Conjecture \ref{conj:strong} remains wide open, possibly also true already for all $n\ge dk-1$.
This slightly lower bound was conjectured by Cs\'oka for $d=2$, but the answer is known for no $3<k\notin \Z$

\section{Warm-up and \sgame}\label{sec:s}

As a warm-up, let us consider the \sgame where searcher needs to find $d=2$ treasures in $n=4$ doors by guessing $k=2$ doors in each round.
After an initial guess of two doors, say, doors $1$ and $2$, she either loses immediately, or finds a treasure behind one of the doors, say, door $1$.
The other treasure can be behind doors $2$, $3$ or $4$.
Searcher has two options for her second (and final) guess: she can either guess the two doors that she hasn't guessed before, doors $3$ and $4$, or guess door $2$ and one of $3$ or $4$.
What should she do?

This situation is very similar to popular paradoxes, such as Bertrand's box paradox, the Three Prisoners problem and especially to the Monty Hall problem.
In the latter problem, known from a TV show hosted by Monty Hall, contestants were presented three doors, one of which hid a car and the other two goats.
Their goal was to open the door that hid the car.
The contestant could pick a door first (without opening it).
Then the host has opened one of the other two doors, one that hid a goat.
Finally, the contestant could decide whether to open the door she has originally picked, or the other unopened door.
Many contestants opened their originally picked door, giving them a chance of $\frac 13$ to walk away with the car, opposed to $\frac 23$ had they decided to switch.

For illustration, let us calculate the chances of winning in the \sgame for searcher when the treasures are distributed uniformly at random, i.e., each of the $\binom 42$ options has $\frac 16$ probability.
If in the first round she picks doors $1$ and $2$, and in the second round doors $3$ and $4$, then she wins in $4$ out of the $6$ cases, i.e., with probability $\frac 23$.
If in the first round she picks doors $1$ and $2$, and in the second round doors $2$ and $3$ (supposing that she has found a treasure behind door $1$), then she wins in $3$ out of the $6$ cases, i.e., with probability $\frac 12$ (assuming that hider reveals both treasures with $50\%$ chance if searcher guesses both with her first guess).
This shows that in the \sgame it is also better to switch.

\begin{proof}[Proof of Theorem \ref{thm:s}]
Though the upper bound follows from Lemma \ref{lem:upper}, we present it here for this special version in more detail.
In this game hider doesn't have much options, the best he can do is to hide the $d$ treasures uniformly at random.
We need to prove that searcher has at most $\frac{k^d}{\binom nd}$ probability of winning against this distribution of the treasures.
This is enough to prove for deterministic strategies of the searcher.
Suppose that she picks doors $X_i$ in round $i$ (where $X_i$ might depend on the answer to all $X_j$ for $j<i$). 
To win, she needs each of the $X_i$ to contain a treasure.
(But each $X_i$ containing a treasure doesn't imply that she wins!)
There are $\Pi_i |X_i|\le k^d$ possibilities for the outcomes.
As in total there are $\binom nd$ possibilities, her chance of winning is at most $\frac{k^d}{\binom nd}$ against the uniform distribution.
This can also be attained if she always picks $k$ new doors at random, which can be done if $n\ge dk$, proving the lower bound.
\end{proof}

\section{\mgame}\label{sec:m}

This section contains the proof of our main result about the \mgame. 

\begin{proof}[Proof of Theorem \ref{thm:m}]
The upper bound follows from Lemma \ref{lem:upper}; searcher can cover at most $k^d$ of the $\binom{n+d-1}d$ possibilities, and thus her chance of winning is at most $\frac{k^d}{\binom{n+d-1}d}$.
(Alternatively, it also follows from Cs\'oka's upper bound for \agame and Observation \ref{obs:link}.)

The lower bound is more involved.
First, notice that if $k=1$, then $v_M(n,d,1)\ge \frac{1}{\binom{n+d-1}d}$ can be achieved with the following strategy:
Searcher initially picks an allocation of the treasures, $\mu$, uniformly at random from the $\binom{n+d-1}d$ possibilities, then in each round she selects a door with the aim of finding the treasures where they are in $\mu$.

We will use this strategy for $k=1$ to give a strategy for $k>1$.
To this end, suppose that in the above strategy for $k=1$, searcher additionally follows the below rules.
\begin{itemize}
\item If she guesses a door which hides more treasures according to $\mu$, then she keeps on guessing the same door in the subsequent rounds until she finds all the treasures behind this door.
\item When she needs to guess a new door, she always guesses the one which hides the most number of treasures according to $\mu$ among the doors not yet guessed; in case of equality, she picks randomly from the doors with the most number of treasures hidden.
\end{itemize}

For example, suppose that she has picked allocation $\mu$, where doors $1$ and $2$ each have two treasures, door $3$ has none and door $4$ has three treasures.
In this case, in the first three rounds she would pick door $4$, then in rounds $4$ and $5$ she can pick door $1$ (or $2$, respectively, with $50\%$ chance) and finally in rounds $6$ and $7$ door $2$ (or $1$, respectively).
Note that if we were to depict the treasures found at subsequently guessed doors, we would obtain a monotone decreasing sequence, in other words, a Young diagram.
We denote the (unique) Young diagram with only one element by ``$.$'' in subscript.

Still assuming $k=1$, suppose that so far the searcher has already found some treasures behind some doors using the above strategy, giving some Young diagram $\lambda$.
For an outside observer who doesn't know $\mu$, the above rules exactly determine a probability $p_\lambda(n,d,1)$ with which she would continue guessing her current door, or guess a new door instead with probability $q_\lambda(n,d,1)=1-p_\lambda(n,d,1)$, selected uniformly from the so far unguessed doors.
For example, if $n=2$ and $d=2$, and in the first round she has found a treasure behind door $1$, then she would again guess door $1$ with probability $p_.(2,2,1)=\frac 23$ and guess door $2$ with probability $q_.(2,2,1)=\frac 13$; this is because the conditional expectation for $\mu$ after the first round is that with probability $\frac 23$ both treasures are behind door $1$ and with probability $\frac 13$ there is one treasure behind each door (in this case she had $50\%$ chance to start with door $1$ and not door $2$).
More generally, after finding a treasure in the first round, she would guess a new door with probability $q_.(n,d,1)=\frac{\binom nd}{\binom{n+d-1}d}$, and after this her only option would be to keep on guessing a new door in each round.
Note that $p_\lambda(n,d,1)\to 0$ as $n\to \infty$ for any fixed $\lambda$.

Now we are ready to present the strategy of the searcher for $k>1$ for large enough $n$.
It will again be such that at any time the treasures found behind subsequent doors form a Young diagram $\lambda$.
In each round searcher guesses the last guessed door and guesses $k-1$ new doors with probability $p_\lambda(n,d,k)=k\cdot p_\lambda(n,d,1)$, and otherwise guesses $k$ new doors with probability $q_\lambda(n,d,k)=1-p_\lambda(n,d,k)$.
(Where the new doors are always selected uniformly at random from the so far unguessed doors.)
For new doors to be always available, we need that $n\ge dk$ (as conjectured by Cs\'oka), but for $p_\lambda(n,d,k)$ to denote a probability (i.e., for $p_\lambda(n,d,k)\le 1$), it might be needed that $n$ is even larger; e.g., if $n=6$, $d=3$ and $k=2$, we would get $p_.(6,3,2)>1$ - for a detailed discussion of small values of $n$, see Section \ref{sec:small}.

To finish the proof, we need to show that the above strategy indeed finds any allocation $\mu$ of the treasures with probability $\frac{k^d}{\binom{n+d-1}d}$.
Note that if at anytime the guess contains more than one door that hides undiscovered treasures, searcher will eventually lose.
Thus, it is enough to show that for each $\lambda$ the strategy for general $k$ has exactly $k$ times as much chance of guessing any door $i$ in the next round as the strategy for $k=1$.
It is important that here we consider only how this probability depends on $\lambda$, i.e., for $k>1$ the doors that were guessed earlier but were not revealed to hide a treasure are ignored and treated as not yet guessed.

There are two cases to consider.
The first case is if door $i$ is where searcher has last found a treasure.
In this case searcher's probability of guessing it again is $p_\lambda(n,d,k)=k\cdot p_\lambda(n,d,1)$. 
The second case is if door $i$ is a new, yet unguessed door.
In this case searcher's probability of guessing door $i$ equals the expected number of newly guessed doors divided by the number of doors that are not yet known to contain a treasure (including the already guessed doors that were not revealed to hide a treasure).
The denominator is the same for $k>1$ and $k=1$, thus it is enough to consider the nominator.
According to our strategy, the expected number of newly guessed doors equals $k-1+1-k\cdot p_\lambda(n,d,1)=k\cdot q_\lambda(n,d,1)$.
This finishes the proof.
\end{proof}

\begin{remark}
One could also give the equation for each $\lambda$ that the probabilities need to satisfy so that the probability of finding each $\lambda$-shaped treasure allocations is $\frac{k^d}{\binom{n+d-1}d}$.
For example, the probability of finding a $(2,1)$-shaped allocation is $\frac kn \cdot p_.(n,3,k)\cdot \frac{(k-p_:(n,3,k))}{n-1}=\frac{k\cdot p_.(n,3,k)\cdot (k\cdot q_:(n,3,k))}{n(n-1)}$.
This way we get $p(d)$ equations, where $p(d)$ denotes the {\em partition function}, which equals the number of Young diagrams of size $d$.
Note that since the strategy always finds an allocation and no allocation is found in two different ways, one of these equations is redundant.
The number of variables ($p_\lambda(n,d,k)$'s) of these equations equals the number of Young diagrams of size at most $d-1$ with the additional property that the smallest part doesn't equal the second smallest part.
This number is exactly $p(d)-1$, the number of equations minus one redundant; this is because we can consider each event associated to $p_\lambda(n,d,k)$ as a choice, and every sequence of choices returns a different Young diagram.\footnote{I am thankful to P\'eter Cs\'ikv\'ari for proving this identity using generating functions before I've realized this simple equivalence.}
Although we could not prove that there are no additional redundant equalities and the equations are non-linear, this suggests that there is only one choice for the $p_\lambda(n,d,k)$ (that follows our strategy).
\end{remark}

%
%
%


\section{Small \texorpdfstring{$n$}{n}}\label{sec:small}

As can be seen from the proof of the upper bound of Theorem \ref{thm:s}, any optimal searching strategy that attains the upper bound must guess $k$ new doors in each round.
This also implies that $v_1(n,d,k)<\frac{k^d}{\binom nd}$ if $n< dk$.

The situation for the \mgame is different.

If $d=1$, then the bound in Theorem \ref{thm:m} is sharp for every $n\ge k$.

If $d=2$, the bounds are still always sharp if $n\ge dk$.
For $n\le 2$, this is trivial.
For $n\ge 4$, it can be checked that $p_\lambda(n,2,k)=k\cdot p_\lambda(n,2,1)\le 1$ always holds.
For $n=3$ and $k=1$ or $k=3$, the bounds are again trivially sharp.
Finally, the only interesting case left is if $n=3$ and $k=2$; here we get $p_.(3,2,2)=2\cdot p_.(3,2,2)=1$ and thus $q_.(3,2,2)=0$ - the only possible value for which the algorithm works, as $p_.(3,2,2)\le 1$ indeed denotes a probability and $q_.(3,2,2)=0$ means that we never have to guess the (non-existent) fourth door; a coincidence?
(The same thing happens for these values in \agame, and was already observed by Cs\'oka.)

If $d=3$, the first non-trivial case is $k=2$ and $n=3$.
The uniform hiding strategy gives $v_M(3,3,2)\le \frac{2^3}{\binom{3+3-1}3}=80\%$.
This, however, is suboptimal, as hiding all three treasures behind the same door (chosen uniformly at random) gives $v_M(3,3,2)\le \frac 23$.
(It is easy to see that in fact $v_M(3,3,2)= \frac 23$.)
In general, hiding all treasures behind the same door gives the following upper bound.

\begin{claim}\label{claim:allinone} $v_M(n,d,k)\le \frac kn$.
\end{claim}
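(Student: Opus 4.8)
The plan is to prove the bound by exhibiting a single, very simple hiding strategy and then analysing only the searcher's first query. The hider picks one door uniformly at random from the $n$ doors and buries all $d$ treasures behind it. Against any fixed distribution of the treasures it suffices, by Lemma~\ref{lem:upper} (equivalently, because finite search games have a value), to bound the winning probability of an arbitrary \emph{deterministic} searcher.

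The key observation is that the first round is already decisive. In her first round the searcher guesses some set $X_1$ of at most $k$ doors; this set is fixed, since her strategy is deterministic and no information has yet been revealed. If the hider's chosen door is not in $X_1$, then every guessed door is empty, the answer is empty, and the searcher loses immediately. Hence the event ``the searcher is still alive after round~$1$'' is contained in the event ``the chosen door lies in $X_1$'', which has probability $|X_1|/n\le k/n$. Consequently her overall winning probability is at most $k/n$, and since this holds for every deterministic searcher strategy, we get $v_M(n,d,k)\le k/n$.

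There is essentially no technical obstacle here; the only points worth flagging are, first, that this bound is informative precisely when it beats Theorem~\ref{thm:m}, i.e.\ when $\frac kn<\frac{k^d}{\binom{n+d-1}d}$, which happens only for small $n$ (for instance it yields $v_M(3,3,2)\le\tfrac23$ instead of the weaker $\tfrac45$), and second, that hiding all $d$ treasures behind one door is only a legal move in the \mgame, which is why the analogous estimate fails for $v_1$.
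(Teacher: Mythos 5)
Your proof is correct and matches the paper's (implicit, one-line) argument exactly: the hider buries all $d$ treasures behind a uniformly random door, and the searcher's first query of at most $k$ doors hits that door with probability at most $k/n$. No issues.
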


It is also interesting to study $d=3$, $k=2$, $n=6$, the first case when $p_.(n,d,k)>1$, as $q_.(6,3,1)=\frac{\binom 63}{\binom{6+3-1}3}=\frac{20}{56}$ and $p_.(6,3,2)=2\cdot p_.(6,3,1)=2\cdot(1-q_.(6,3,1))=2\cdot \frac{36}{56}>1$.
The uniform hiding strategy gives $v_M(6,3,2)\le \frac{2^3}{\binom{6+3-1}3}=\frac 17$, which is better than the bound of Claim \ref{claim:allinone}.
And indeed, $v_M(6,3,2)=\frac 17$ can be achieved by the following searcher strategy.
In the first round, guess two doors, say $1$ and $2$.
Unless searcher loses immediately, one of them, say $1$, is revealed to have a treasure.
In the second round with probability $p_.$ searcher guesses doors $1$ and $3$, and with probability $1-p_.$ doors $3$ and $4$.
If she again finds a treasure behind door $1$, then in round $3$ she guesses doors $1$ and $5$ with probability $p_:$, and doors $5$ and $6$ otherwise.
If she has found the second treasure behind some other door, say, door $3$, then in round $3$ she guesses doors $3$ and $5$ with probability $p_{..}$ and doors $5$ and $6$ otherwise.
A simple calculation gives that $p_. p_:=\frac 37$ and $(2-p_.)(2-p_{..})=\frac {10}7$ are the only conditions these probabilities need to satisfy to find any treasure allocation with probability $\frac 17$.
These probabilities are between $0$ and $1$ if and only if $\frac 47\le p_. \le 1$.

Note that if $p_.=1$, we never need to guess all six doors.
This suggests that the upper bound might be again sharp for $d=3$, $k=2$, $n=dk-1=5$, just like for $d=2$.
And indeed, the choice of $p_.=1$, $p_:=\frac 47$ and $p_{..}=\frac 67$ confirms $v_M(5,3,2)\le \frac{2^3}{\binom{5+3-1}3}=\frac 8{35}$.
In fact, for any $d=3$, $n=3k-1$ values the choice of $p_.=1$, $p_:=\frac{nk^2}{\binom{n+2}3}\to \frac 23$ and $p_{..}=\frac {n+1}{n+2}\to 1$ works.

For $d=3$, $k=2$, $n=4$, the uniform hiding strategy still gives a better bound than Claim \ref{claim:allinone}, but apparently it cannot be attained, as ``there are not enough doors.''
This seems like a good candidate where the different variants of the game might have different values.


\section{Concluding remarks}\label{sec:remarks}

What happens if $n$ is small?
Do some $p_\lambda(n,d,k)$ probabilities exist for every $n\ge dk$?
A possible approach might be to prove their existence using some argument similar to the one used Baranyai's theorem \cite{Baranyai}.
Can it ever be useful to pick less than $k$ doors?
We could not eliminate this possibility for $n<dk$.
Does it matter whether get a random treasure or hider reveals one?
What if instead a beneficent helper of searcher reveals the treasures?
Does $v_A(n,d,k)=v_M(n,d,k)$ always hold?

What happens if we need to find only $d'$ of the $d$ treasures?

What happens if we can make $e$ unsuccessful guesses?

What happens if instead of bounded query size we impose other restrictions on the queries in caching games of this type?

\subsection*{Acknowledgment}

I am thankful to Endre Cs\'oka for discussions on the topic.

\renewcommand{\refname}{\large References}


\begin{thebibliography}{99}\fontsize{10}{0}

\bibitem{Alpernpaper} S. Alpern, R. Fokkink, T. Lidbetter, and N. S. Clayton, A search game model of the scatter hoarder's problem, Journal of The Royal Society Interface 9(70):869--879, 2012.

\bibitem{Alpernbook} Search theory: a game theoretic perspective (S. Alpern, R. Fokkink, G. Leszek, R. Lindelauf, V. S. Subrahmanian (Eds.)), Springer Science \& Business Media, 2013.

\bibitem{Baranyai}  Zs. Baranyai, On the factorization of the complete uniform hypergraph, in Infinite and Finite Sets (A. Hajnal, R. Rado, V. T. S\'os, (Eds.)), Proc. Coll. Keszthely, Colloquia Math. Soc. J\'anos Bolyai 10, North-Holland, 91--107, 1973.

\bibitem{Csoka} E. Cs\'oka, Limit theory of discrete mathematics problems, preprint, arxiv.org/1505.06984v2.

\bibitem{CsL} E. Cs\'oka and T. Lidbetter, The solution to an open problem for a caching game, Naval Research Logistics 63(1):23--31, 2016.


\end{thebibliography}
\end{document}